\newtheorem{theorem}{Theorem}[section]
\newtheorem{lemma}[theorem]{Lemma}
\newtheorem{proposition}[theorem]{Proposition}
\theoremstyle{definition}
\newtheorem{definition}[theorem]{Definition}
\newtheorem{question}[theorem]{Question}
\newtheorem{remark}[theorem]{Remark}
\newtheorem*{thintro}{Theorem}
\newtheorem*{propintro}{Proposition}
\newtheorem*{questintro}{Question}
\newcommand{\NN}{{\mathbb{N}}}
\newcommand{\ZZ}{{\mathbb{Z}}}
\newcommand{\Oint}{\mathcal{O}}
\newcommand{\FF}{{\mathbb{F}}}
\newcommand{\Weyl}{{\bold A}_n}
\newcommand{\Spec}{\textrm{Spec}\,}
\newcommand{\Ima}{\textrm{Im}\,}
\def\pf{\begin{proof}}
\def\epf{\end{proof}}
\begin{document}

\title[Semisimple Hopf actions on Weyl algebras]{Semisimple Hopf actions on Weyl algebras}

\author{Juan Cuadra}
\address{Department of Mathematics, University of Almeria,
Spain}
\email{jcdiaz@ual.es}

\author{Pavel Etingof}
\address{Department of Mathematics, Massachusetts Institute of Technology,
Cambridge, MA 02139, USA}
\email{etingof@math.mit.edu}

\author{Chelsea Walton}
\address{Department of Mathematics, Massachusetts Institute of Technology,
Cambridge, MA 02139, USA}
\email{notlaw@math.mit.edu}

\subjclass[2010]{12E15, 13A35, 16T05, 16W70}
\keywords{division algebra, Hopf algebra action, reduction modulo p, Weyl algebra}

\maketitle

\begin{abstract}
We study actions of semisimple Hopf algebras $H$ on Weyl algebras $A$ over an algebraically closed field of characteristic zero. We show that the action of $H$ on $A$ must factor through a group action; in other words, if $H$ acts inner faithfully on $A$, then $H$ is cocommutative. The techniques used include reduction modulo a prime number and the study of semisimple cosemisimple Hopf actions on division algebras.
\end{abstract}

\section{Introduction}

Let $k$ be an algebraically closed field of characteristic zero and $H$ a semisimple Hopf algebra over $k$. In \cite[Theorem 1.3]{EW1}, two of the authors showed that any action of $H$ on a commutative domain over $k$ factors through a group action. The goal of this paper is to extend this result to Weyl algebras. Our main result states:

\begin{thintro}[Theorem~\ref{main1}] \label{main1intro} {\it Any semisimple Hopf action on the Weyl algebra $\Weyl(k)$ factors through a group action.}
\end{thintro}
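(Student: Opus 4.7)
The plan is to reduce the characteristic-zero statement to an assertion about Hopf actions on division algebras in positive characteristic, where the structural theory mentioned in the abstract applies, and then propagate the conclusion back to characteristic zero by a Zariski-density argument.

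Suppose for contradiction that $H$ acts inner faithfully on $A = \Weyl(k)$ but is not cocommutative. Since $H$, the multiplication on $A$, and the action map $H \ot A \to A$ are each determined by finitely many structure constants, we choose a finitely generated $\ZZ$-subalgebra $R \subset k$ over which integral forms $H_R$ (free of finite rank as an $R$-module) and $A_R = \Weyl(R)$ are defined, with a compatible $H_R$-action. By picking a finitely generated $H_R$-stable $R$-submodule $V_R \subset A_R$ on which the induced map $H \to \mathrm{End}_k(V_R \ot_R k)$ is injective, and inverting finitely many elements of $R$ if necessary, we may arrange that $H_R \to \mathrm{End}_R(V_R)$ is itself injective. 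Then for every closed point $\mathfrak{m} \in \Spec R$ outside a proper closed subscheme, with residue field $\FF_q$ of characteristic $p$, reduction yields an inner faithful action of $H_q := H_R \ot_R \FF_q$ on $A_q := \Weyl(\FF_q)$.

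Next we exploit positive characteristic. For all but finitely many primes $p$ the Hopf algebra $H_q$ is both semisimple and cosemisimple. The Weyl algebra $A_q$ is a free module of rank $p^{2n}$ over its center $Z_q = \FF_q[x_1^p,\ldots,x_n^p,\partial_1^p,\ldots,\partial_n^p]$, and a classical computation shows that $D_q := A_q \ot_{Z_q} \mathrm{Frac}(Z_q)$ is a division algebra of degree $p^n$ over $K_q := \mathrm{Frac}(Z_q)$. Because the center of any $H$-module algebra is preserved by the Hopf action (an easy consequence of the antipode axioms), the $H_q$-action restricts to $Z_q$ and extends uniquely to $D_q$; inner faithfulness transfers via the inclusion $A_q \hookrightarrow D_q$. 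Invoking the theorem on semisimple cosemisimple Hopf actions on division algebras referenced in the abstract, we conclude that $H_q$ must be cocommutative.

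Since cocommutativity of $H_q$ holds at a Zariski-dense set of closed points of $\Spec R$, and the defining condition $\sigma \circ \Delta_R = \Delta_R$ is a closed condition on $\Spec R$, it must hold on the generic point. Thus $H = H_R \ot_R k$ is cocommutative, contradicting our hypothesis. In my view the principal obstacle is the second paragraph: verifying that the localized Weyl algebra is genuinely a division algebra (rather than a more general central simple algebra) and that the $H_q$-action on $A_q$ interacts well with the enormous central subalgebra $Z_q$. The descent in the first paragraph and the Zariski-density step at the end are relatively routine, but this precise structural control of the Weyl algebra modulo $p$ requires careful analysis.
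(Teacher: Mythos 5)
Your overall route is the paper's: reduce mod $p$, pass to the quotient division algebra of the Weyl algebra in characteristic $p$ (a division algebra of degree $p^n$), apply the semisimple--cosemisimple result for division algebras, and descend to characteristic zero by an injectivity/density argument (your last step is essentially Lemma~\ref{lemm2}(ii) and is fine). However, two steps in the middle are genuinely broken as written. The most serious is your mechanism for putting the $H_q$-action on $D_q$: it is \emph{not} true that the center of an arbitrary $H$-module algebra is preserved by the action. (Counterexample: let $(kG)^*$ act on $kG$ graded by a nonabelian group $G$ via projection onto homogeneous components; class sums are central, but their homogeneous components are not.) In the paper, $H$-stability of the center of $D_p$ is the content of Proposition~\ref{noaction}(i), and its proof needs $D=ZD^H$, which in turn uses ${\rm gcd}(d!,p^n)=1$, i.e.\ $p>d$; it is a \emph{consequence} of the coprimality hypothesis, not a general fact, and it is not how the action reaches $D_p$. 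The extension of an inner faithful action from the PI domain $A_p$ to its quotient division ring is nontrivial: inverting central elements does not obviously carry the $H$-module algebra structure along, and your ``extends uniquely to $D_q$'' has no justification. The paper obtains this from Lemma~\ref{lem:quotdiv}, i.e.\ from the Skryabin--Van Oystaeyen theorem \cite[Theorem 2.2]{SV}; without that (or an equivalent) input, your argument does not reach a division algebra at all.

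The second gap is the transfer of inner faithfulness mod $p$: you conflate inner faithfulness with faithfulness. Inner faithfulness only says no nonzero \emph{Hopf ideal} annihilates $A$; it does not imply that $H\to \mathrm{End}_k(A)$ is injective, so the finitely generated $H_R$-stable $V_R\subset A_R$ with $H\to \mathrm{End}_k(V_R\otimes_R k)$ injective that your argument requires need not exist. The paper's Proposition~\ref{prop:modp}(ii) repairs exactly this point: one first shows $H$ acts \emph{faithfully} on some tensor power $A^{\otimes s}$ (the intersection of the kernels $K_s$ is a Hopf ideal, hence zero by inner faithfulness), reduces that faithful action mod $p$ by a determinant argument, and then observes that a Hopf ideal of $H_p$ killing $A_p$ would kill $A_p^{\otimes s}$. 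A smaller issue: you reduce to finite residue fields $\FF_q$, whereas Proposition~\ref{noaction}(ii) is applied over an algebraically closed field (its proof invokes \cite[Theorems 4.1 and 5.1]{EW1}); one should reduce via homomorphisms $\psi:R\to\overline{\FF}_p$ as in the paper, so that the base field is $\overline{\FF}_p$ while the (non-algebraically-closed) center of $D_p$ plays no special role.
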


An equivalent formulation would be the following: if $H$ acts {\it inner faithfully} on $\Weyl(k)$, then $H$ is cocommutative. By definition, inner faithfulness means that the action of $H$ does not factor through a quotient Hopf algebra of smaller dimension.

Note that when the action of $H$ preserves the standard filtration of $\Weyl(k)$, Theorem \ref{main1} can be deduced from \cite[Proposition 5.4]{EW1}, since the associated graded algebra ${\rm gr}(\Weyl(k))$ is a commutative domain. Our main achievement in this paper is to eliminate this assumption.

We also obtain the result above for $H$ finite dimensional, not necessarily semisimple, provided that the action of $H$ gives rise to a Hopf-Galois extension, see Theorem \ref{main1a}.

The proof of Theorem \ref{main1} relies on reduction modulo a prime number, which allows us to reduce to the case where the algebra satisfies a polynomial identity (or is PI, for short). In this case, its quotient field is a division algebra with an action of $H$ (Lemma \ref{lem:quotdiv}). We then use the following result, interesting by itself:

\begin{propintro}[Proposition \ref{noaction}(ii)] \label{divisionintro}
{\it Let $H$ be a semisimple cosemisimple Hopf algebra of dimension $d$ over an algebraically closed field $F$ (of any characteristic). Let $D$ be a division algebra over $F$ of degree $m$. If $d!$ is coprime to $m$, then any action of $H$ on $D$ factors through a group action.}
\end{propintro}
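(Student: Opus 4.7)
The plan is to reduce to an inner-faithful action, prove that $D^H$ is a division subalgebra over which $D$ is free of rank $d$, and then exploit the coprimality hypothesis to force $H$ to act inner-faithfully on the center of $D$ through a group action of order exactly $d$, hence $H$ itself must be a group algebra.

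First, replace $H$ by its inner-faithful image on $D$: the resulting Hopf quotient has dimension at most $d$, so $\gcd(d!,m)=1$ is preserved, and we may assume $H$ acts inner-faithfully. The invariant subalgebra $D^H$ is a division subalgebra: for a nonzero $x\in D^H$, applying $h\in H$ to $xx^{-1}=1$ and using $h_{(1)}\cdot x=\varepsilon(h_{(1)})x$ collapses the product rule to $x(h\cdot x^{-1})=\varepsilon(h)\cdot 1$, whence $h\cdot x^{-1}=\varepsilon(h)x^{-1}$.

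The key technical input is that $D$ is $D^H$-free of rank $d$. Since $H$ is semisimple cosemisimple and $D$ is simple, the Cohen--Fishman theorem gives that the smash product $D\#H$ is semiprime, and because it is finitely generated over the Artinian ring $D$ it is semisimple Artinian. The algebra $D$ is a simple $D\#H$-module (as $D$ has no nontrivial left ideals), and a direct computation yields $\mathrm{End}_{D\#H}(D)=D^H$ acting by right multiplication. The Jacobson density theorem then identifies $D\#H$ with $\mathrm{End}_{D^H}(D)^{\mathrm{op}}$; since $D^H$ is a division ring, $D$ is $D^H$-free, and comparing dimensions on both sides of the isomorphism gives rank $d$.

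With freeness in hand, the heart of the proof is arithmetic. The center $Z:=Z(D)$ is $H$-stable, so $H$ acts on $Z$ and $Z\cap D^H=Z^H$. Freeness $\dim_{Z^H}D=d\dim_{Z^H}D^H$ unpacks to $m^2[Z:Z^H]=d(m')^2[Z(D^H):Z^H]$, where $m':=\deg D^H$. Since $\gcd(d,m)=1$ gives $\gcd(d,m^2)=1$, we conclude $d\mid[Z:Z^H]$. On the other hand, Hopf--Galois theory for the field extension $Z/Z^H$ gives the bound $[Z:Z^H]\leq\dim H=d$, so $[Z:Z^H]=d$. Consequently $H$ acts inner-faithfully on the commutative domain $Z$, and by \cite[Theorem~1.3]{EW1} (in characteristic zero, or an analogous result in positive characteristic) the action factors through a group algebra $F[G]$ with $|G|=[Z:Z^H]=d=\dim H$. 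Thus the quotient map $H\twoheadrightarrow F[G]$ is an isomorphism, and $H$ is cocommutative.

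The main obstacle is the final reduction to the commutative case in positive characteristic: one needs a version of \cite[Theorem~1.3]{EW1} valid in any characteristic for semisimple cosemisimple Hopf actions on commutative domains. In applications this can be handled either via the Etingof--Gelaki lifting of such Hopf algebras to characteristic zero, or via a direct Galois-theoretic analysis of the degree-$d$ field extension $Z/Z^H$ exploiting the coprimality hypothesis.
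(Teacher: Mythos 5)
There is a genuine gap, and it sits at the heart of your argument: the claim that $D$ is free of rank exactly $d$ over $D^H$. Freeness itself is automatic (any module over the division ring $D^H$ is free), but the rank is only known to be \emph{at most} $d$, by \cite[Corollary~2.3]{BCF} (Lemma~\ref{dens} in the paper). Your route to equality via $D\#H\cong \mathrm{End}_{D^H}(D)^{\mathrm{op}}$ does not work as stated: the Jacobson density theorem only yields a \emph{surjection} $D\#H\twoheadrightarrow \mathrm{End}_{D^H}(D)$ once $[D:D^H]$ is finite; to get injectivity you would need $D$ to be a faithful $D\#H$-module, which does not follow from inner faithfulness of the $H$-action (the annihilator of $D$ in $D\#H$ is an ideal of the smash product, not a Hopf ideal of $H$, and since $D\#H$ is merely semisimple rather than simple it can kill entire blocks). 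In fact, the statement $[D:D^H]=d$ is essentially the assertion that $D/D^H$ is Hopf--Galois, which the paper explicitly poses as an open Question in Section~\ref{sec:division}; the paper even remarks that if one knew $[D:D^H]$ divides $d$ one could weaken $d!$ to $d$ in the hypothesis --- the very fact that the hypothesis involves $d!$ rather than $d$ reflects that only the inequality $[D:D^H]\le d$ is available. A secondary issue in the same step: semiprimeness of $D\#H$ is not ``the Cohen--Fishman theorem'' but an open conjecture in general; it is a theorem (Linchenko--Montgomery) for semiprime PI algebras under semisimple cosemisimple actions, so this part can be patched with the correct citation, but it is not enough to rescue the rank computation.

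A second unjustified step is the bald assertion that $Z=Z(D)$ is $H$-stable. For Hopf actions (unlike group actions) the center of a division algebra is not automatically stable, and establishing this is exactly the content of Proposition~\ref{noaction}(i): the paper uses the bound $[D:D^H]_l\le d$ together with $\gcd(d!,m)=1$ to show $[D:ZD^H]_l=1$, hence $D=ZD^H$, hence $C_D(D^H)=Z$ is $H$-stable. Once one has $D=ZD^H$ and $Z$ $H$-stable, inner faithfulness of $H$ on the field $Z$ follows immediately (any Hopf ideal killing $Z$ kills $ZD^H=D$), and one concludes by \cite[Theorems 4.1 and 5.1]{EW1}, which treat semisimple cosemisimple actions on fields in \emph{arbitrary} characteristic --- so the ``main obstacle'' you flag at the end (a positive-characteristic analogue of \cite[Theorem~1.3]{EW1}) is already available in the literature and is not where the difficulty lies. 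In short: your overall strategy (get $H$ to act inner faithfully on $Z$, then invoke the commutative-domain result) is the right one and matches the paper, but the two inputs you use to get there --- $H$-stability of $Z$ and $[D:D^H]=d$ --- are respectively unproved in your write-up and unknown in general; the coprimality hypothesis must be spent on the divisibility argument of Proposition~\ref{noaction}(i), not on an exact rank count.
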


Using these methods, we will establish more general results on semisimple and nonsemisimple
Hopf actions on quantized algebras in future work. In particular, these methods will apply to module algebras $B$ so that:

\begin{enumerate}
\item[($\dag$)] $B_p$, the reduction of $B$ modulo a prime number $p$, is PI and the PI-degree of $B_p$ is a power of $p$, for $p \gg 0$.
\end{enumerate}

\noindent Such algebras include universal enveloping algebras of finite dimensional Lie algebras and algebras of differential operators of smooth irreducible affine varieties. This prompts the following question, which is of independent interest in Ring Theory.

\begin{questintro}
{\it Let $B$ be a $\mathbb{Z}_+$-filtered algebra over $k$ with gr($B$) a finitely generated commutative domain. Does ($\dag$) hold for any large prime $p$?}
\end{questintro}

The paper is organized as follows. We recall preliminary results on reducing Hopf actions to positive characteristic in Section \ref{sec:modp}. In Section \ref{sec:division}, we prove  results on semisimple cosemisimple Hopf actions on division algebras; in particular, we establish
Proposition \ref{noaction}. We prove Theorems \ref{main1} and \ref{main1a} in Section \ref{sec:Weyl}. \medskip

{\bf Notation.} Throughout this paper $k$ is an algebraically closed field of characteristic zero, and $H$ is a Hopf algebra over $k$ of finite dimension $d$. For $n \in \NN$ and a commutative ring $R$ recall that the $n$-th Weyl algebra $\Weyl(R)$ is the $R$-algebra generated by $x_i,y_i \ (i=1,\dots,n)$ subject to the relations $[x_i,x_j]=[y_i,y_j]=0$ and $[y_i,x_j]=\delta_{ij}$.

\section{Reducing Hopf actions modulo a prime} \label{sec:modp}

In this section, we will show that given an action of $H$ on $A:={\bold A}_n(k)$ we can reduce it to positive characteristic (Proposition \ref{prop:modp}).
This is done in a standard way, as one does for any kind of ``finite" linear algebraic structure. To explain this we first need the notion of a Hopf order over a subring $R$ of $k$. See \cite{La} or \cite{CM} for details.

Given a finite dimensional $k$-vector space $V$, an {\it $R$-order} of $V$ is a finitely generated and projective $R$-submodule $V_R$ of $V$ such that the natural map $V_R \otimes_R \, k \rightarrow V$ is an isomorphism. We can now define an order of any algebraic structure existing on $V$ as an order of $V$ closed under the structure maps. The linear isomorphism $V_R \otimes_R k \rightarrow V$ will become an isomorphism for that structure. In particular, we have:

\begin{definition} \label{R-order}
A {\it Hopf $R$-order} of a Hopf algebra $H$ is an order $H_R$ of $H$ such that $1_H \in H_R$, $~H_RH_R \subseteq H_R$, $~\Delta(H_R)\subseteq H_R \otimes_{R} H_R$, $~\varepsilon(H_R) \subseteq R$ and $S(H_R)\subseteq H_R$.
\end{definition}

\begin{lemma} \label{lem:R-order}
Let $H$ be a finite dimensional Hopf algebra over $k$. Assume that $A$ is endowed with an action $\cdot : H \otimes_k A\to A$. Then, there is a finitely generated subring $R$ of $k$ and a Hopf $R$-order $H_R$ of $H$ such that $\cdot$ restricts to an action $\cdot_R:H_R \, \otimes_R A_R \to A_R$, where $A_R:=\Weyl(R)$.
\end{lemma}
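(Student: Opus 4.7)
The plan is a standard spreading-out argument: I will collect every scalar that enters the structure of $H$ and the formulas for the action on the generators of $A$, and let $R$ be the subring of $k$ they generate. Concretely, I fix a $k$-basis $\{h_1 = 1_H, h_2, \ldots, h_d\}$ of $H$ and record the constants $c_{ij}^\ell, \Delta_i^{jk}, \varepsilon_i, S_i^j \in k$ defined by
$$ h_i h_j = \sum_\ell c_{ij}^\ell h_\ell, \qquad \Delta(h_i) = \sum_{j,k} \Delta_i^{jk} (h_j \otimes h_k), \qquad \varepsilon(h_i) = \varepsilon_i, \qquad S(h_i) = \sum_j S_i^j h_j. $$
In addition, for each $i \in \{1, \ldots, d\}$ and each $j \in \{1, \ldots, n\}$, I record the finitely many coefficients in $k$ that appear when $h_i \cdot x_j$ and $h_i \cdot y_j$ are expanded as $k$-linear combinations of monomials in $x_1, \ldots, x_n, y_1, \ldots, y_n$. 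Let $R \subseteq k$ be the subring generated by this finite set of scalars; it is finitely generated over its prime subring.

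With this $R$, I set $H_R := \bigoplus_{i=1}^d R h_i$ and $A_R := \Weyl(R)$, viewed as the natural $R$-subalgebra of $A$ generated by $x_1, \ldots, x_n, y_1, \ldots, y_n$. Then $H_R$ is free, hence finitely generated and projective, over $R$; the inclusion $H_R \hookrightarrow H$ induces an isomorphism $H_R \otimes_R k \xrightarrow{\sim} H$ (since the $h_i$ form a $k$-basis of $H$); and by construction $1_H \in H_R$, $H_R H_R \subseteq H_R$, $\Delta(H_R) \subseteq H_R \otimes_R H_R$, $\varepsilon(H_R) \subseteq R$, and $S(H_R) \subseteq H_R$, so $H_R$ is a Hopf $R$-order of $H$ in the sense of Definition \ref{R-order}.

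To verify that the action restricts to $\cdot_R : H_R \otimes_R A_R \to A_R$, by $R$-linearity it suffices to check $h_i \cdot m \in A_R$ for each basis element $h_i$ and each monomial $m = z_1 z_2 \cdots z_t$ with $z_s \in \{x_1, \ldots, x_n, y_1, \ldots, y_n\}$. I argue by induction on $t$: the base case $t=1$ holds by the choice of $R$, and for the inductive step I apply the module algebra axiom
$$ h_i \cdot \bigl(z_1 (z_2 \cdots z_t)\bigr) = \sum (h_{i,(1)} \cdot z_1)\bigl(h_{i,(2)} \cdot (z_2 \cdots z_t)\bigr), $$
noting that $\Delta(h_i) \in H_R \otimes_R H_R$ writes both tensor factors as $R$-linear combinations of the $h_\ell$, and each $h_\ell \cdot z_1$ lies in $A_R$ by construction while $h_\ell \cdot (z_2 \cdots z_t) \in A_R$ by induction. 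The argument is essentially bookkeeping, and the only point to watch is precisely this last step: although $A$ is infinite dimensional, only the finitely many scalars appearing in $h_i \cdot x_j$ and $h_i \cdot y_j$ are needed to describe the action, after which one leans on the module algebra axiom together with $\Delta(H_R) \subseteq H_R \otimes_R H_R$ to propagate the containment $H_R \cdot A_R \subseteq A_R$ from the generators to all monomials.
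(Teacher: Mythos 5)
Your proposal is correct and follows essentially the same route as the paper: choose a basis of $H$, adjoin to $R$ the structure constants of $H$ and the coefficients of the noncommutative polynomials expressing $h_i\cdot x_j$ and $h_i\cdot y_j$, and take $H_R=\bigoplus_i Rh_i$. The only difference is that you spell out the routine verification (the induction on monomial length via the module algebra axiom and $\Delta(H_R)\subseteq H_R\otimes_R H_R$) that the paper leaves implicit in ``$R$ and $H_R$ are as required.''
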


\pf
Let $\{h_i\}_{i=1}^d$ be a basis of $H$. We have
$$
h_i \, \cdot \, x_j =P_{ij}(x_1,\dots,x_n,y_1,\dots,y_n) \hspace{7pt} \textrm{and} \hspace{7pt} h_i \, \cdot \, y_j=Q_{ij}(x_1,\dots,x_n,y_1,\dots,y_n),
$$
where $P_{ij},Q_{ij}$ are certain noncommutative polynomials over $k$. Let $R$ be any finitely generated subring of $k$ containing both the structure constants of $H$ in $\{h_i\}$ and the coefficients of $P_{ij}$ and $Q_{ij}$. Let $H_R=\bigoplus_{i=1}^d Rh_i$.
Then, $R$ and $H_R$ are as required.
\epf

To pass to positive characteristic, we need the following lemma from Commutative Algebra, which is standard but we provide a proof for the reader's convenience.

\begin{lemma} \label{lemm2}
Let $R$ be a finitely generated subring of $k$. Then: \vspace{-2pt}
\begin{enumerate}
\item[(i)] For a sufficiently large prime number $p$, the set $\Spec R(\overline{\FF}_p)$ is non\-emp\-ty. That is, there exists a homomorphism $\psi: R \to \overline{\FF}_p$.
\item[(ii)] For any prime number $\ell$, the homomorphism
$$\Psi_\ell:=\prod_{p\ge \ell}\ \prod_{\psi\in \Spec\! R(\overline{\FF}_p)}\psi: R \longrightarrow \prod_{p\ge \ell}\ \prod_{\psi\in \Spec\! R(\overline{\FF}_p)}\overline{\FF}_p$$
is injective.
\end{enumerate}
\end{lemma}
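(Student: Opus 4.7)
I would prove (i) via Noether normalization over $\ZZ$ and then deduce (ii) from (i) by localizing at a nonzero element. Since $k$ has characteristic zero and $R \subset k$ is finitely generated, $R$ is a finitely generated $\ZZ$-algebra and an integral domain. Noether normalization for finitely generated algebras over Noetherian domains provides algebraically independent elements $t_1, \ldots, t_d \in R$ and a nonzero integer $N$ such that $R[1/N]$ is a finite module over the polynomial ring $\ZZ[1/N][t_1, \ldots, t_d]$. For any prime $p$ not dividing $N$, the reduction $R[1/N]/(p)$ is then a nonzero finite algebra over $\FF_p[t_1, \ldots, t_d]$, so it admits a maximal ideal $\mathfrak{m}$. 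By the Nullstellensatz over $\FF_p$, the residue field $R[1/N]/\mathfrak{m}$ is a finite extension of $\FF_p$, and therefore embeds into $\overline{\FF}_p$. The composition $R \hookrightarrow R[1/N] \twoheadrightarrow R[1/N]/\mathfrak{m} \hookrightarrow \overline{\FF}_p$ supplies the required $\psi$, proving (i).

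For part (ii), it suffices to show that for every nonzero $r \in R$ and every prime $\ell$, there exist $p \geq \ell$ and $\psi \in \Spec R(\overline{\FF}_p)$ with $\psi(r) \neq 0$. I would apply (i) to the ring $R[1/r] \subset k$, which is again a finitely generated $\ZZ$-subalgebra of $k$ and a domain. By (i), a homomorphism $\widetilde{\psi}: R[1/r] \to \overline{\FF}_p$ exists for all sufficiently large primes $p$, in particular for some $p \geq \ell$. Its restriction $\psi: R \to \overline{\FF}_p$ sends $r$ to an invertible element of $\overline{\FF}_p$, so $\psi(r) \neq 0$. Hence $r \notin \ker \Psi_\ell$, and $\Psi_\ell$ is injective.

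The only nontrivial step is the Noether normalization over $\ZZ$, namely the existence of a single integer $N$ making $R[1/N]$ module-finite over $\ZZ[1/N][t_1, \ldots, t_d]$. This follows by applying classical Noether normalization to $R \otimes_\ZZ \QQ$ to obtain a module-finite structure over $\QQ[t_1, \ldots, t_d]$ (with $t_i$ chosen in $R$ after scaling), and then clearing denominators in the finitely many coefficients that appear; it is standard and presents no real difficulty. With this in hand, the Nullstellensatz application in (i) and the localization trick in (ii) are routine, so I do not anticipate any further obstacle.
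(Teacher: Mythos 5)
Your proposal is correct, but it takes a genuinely different route from the paper. For (i) you use Noether normalization over $\ZZ$ (spreading out: $R[1/N]$ module-finite over $\ZZ[1/N][t_1,\dots,t_d]$) and then the Nullstellensatz over $\FF_p$; the paper instead uses the classical Nullstellensatz to produce a map $\phi: R\to\overline{\QQ}$, observes that its image lies in $S=\Oint_L[1/r]$ for a number field $L$, and reduces $S$ modulo $p\nmid r$, where $S/pS=\Oint_L/p\Oint_L$ is a nonzero finite $\FF_p$-algebra. For (ii) you give the slick localization trick -- apply (i) to $R[1/x]$, so injectivity is a formal corollary of (i) (in effect, the Jacobson property of finitely generated $\ZZ$-algebras) -- whereas the paper argues directly: if $\Psi_\ell(x)=0$, then for each $\phi$ the element $\phi(r^m x)\in\Oint_L$ lies in $p\Oint_L$ for all large $p$, hence vanishes because $\Oint_L$ is a finitely generated abelian group, and then $x=0$ since the maps to $\overline{\QQ}$ are jointly injective by the Nullstellensatz. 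Your version is arguably cleaner in that both parts are handled uniformly and (ii) costs nothing extra, but it leans on normalization over $\ZZ$ and on the unproved (though standard) assertion that $R[1/N]/(p)\neq 0$ for $p\nmid N$; this needs a word -- e.g.\ lying over for the integral extension $\ZZ[1/N][t_1,\dots,t_d]\subset R[1/N]$, or the observation that $1/p$ integral over this normal polynomial ring would force $1/p\in\ZZ[1/N]$. The paper's route stays inside algebraic number theory (rings of integers and their reductions) and needs only the classical Nullstellensatz, at the price of a slightly more hands-on proof of (ii).
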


\pf
(i) This is a special case of Chevalley's constructibility theorem for schemes, applied to the natural morphism $\pi: \Spec R \to \Spec \ZZ$, see, e.g., \cite[1.8.4]{EGA}.
To prove it, note that by the Nullstellensatz, there is a homomorphism $\phi: R \to \overline{\Bbb Q}$. Since $R$ is finitely generated, $\Ima\phi$ is contained in some number field $L\subset \overline{\Bbb Q}$; and moreover, in $\Oint_{L}[1/r]\subset L$, where $\Oint_{L}$ is the ring of integers of $L$, and $r \in \NN$. Set $S=\Oint_{L}[1/r]$. For any prime $p$ not dividing $r$, we have $p\Oint_{L}=(pS) \cap \Oint_{L}$, and therefore $S/pS=\Oint_{L}/p\Oint_{L}$, which is a finite dimensional commutative $\Bbb F_p$-algebra. \par \smallskip

(ii) Let $x\in R$ be such that $\Psi_\ell(x)=0$. Consider the element $\phi(x)$ for a homomorphism $\phi: R\to \Oint_{L}[1/r]$ as above. We have $\phi(r^mx) \in \Oint_{L}$ for some $m\in \NN$. For all sufficiently large $p$, the algebra $\Oint_{L}/p\Oint_{L}$ is a direct sum of finite fields of characteristic $p$. Since $\Psi_\ell(x)=0$, the projection of $\phi(r^mx)$ to $S/pS=\Oint_{L}/p\Oint_{L}$ is zero. This implies that $\phi(x)=0$ because $\Oint_{L}$ is a finitely generated abelian group. As this is satisfied for all choices of $\phi$, the Nullstellensatz implies that $x=0$, as claimed.
\epf

Now using Lemma \ref{lem:R-order} and Lemma \ref{lemm2}, we can define the {\it reduction of $H$ modulo $p$} by the formula $$H_{\psi,p}:=H_R \otimes_R \overline{\FF}_p,$$ where the action of $R$ on $\overline{\FF}_p$ is via a homomorphism $\psi\in \Spec R(\overline{\FF}_p)$.
When no confusion is possible, we will simply write $H_p$ instead of $H_{\psi,p}$.

\begin{proposition} \label{prop:modp}
For a sufficiently large prime $p$: \vspace{-2pt}
\begin{enumerate}
\item[(i)] The algebra $A_p:=\Weyl(\overline{\FF}_p)$ admits an action of the Hopf algebra $H_p$.
\item[(ii)] The action of $H_p$ on $A_p$ is inner faithful when the action of $H$ on $A$ is inner faithful.
\end{enumerate}
\end{proposition}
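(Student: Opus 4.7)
The plan is to apply the base-change functor $-\otimes_R \overline{\FF}_p$ to the $R$-order of Lemma~\ref{lem:R-order} and verify that both statements descend. For (i), this is essentially formal: since $H_R$ is a Hopf $R$-order acting on $A_R=\Weyl(R)$, tensoring with $\overline{\FF}_p$ along any $\psi\in \Spec R(\overline{\FF}_p)$ preserves the multiplication, unit, comultiplication, counit, antipode, and the module-algebra action. Thus $H_p=H_R\otimes_R \overline{\FF}_p$ is a Hopf algebra over $\overline{\FF}_p$ acting on $A_R\otimes_R \overline{\FF}_p=\Weyl(\overline{\FF}_p)=A_p$, and Lemma~\ref{lemm2}(i) furnishes such a $\psi$ for $p\gg 0$.

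For (ii), I would invoke the \emph{coefficient sub-Hopf-algebra} characterization of inner faithfulness. Fix a $k$-basis $\{e_j\}$ of $A$ and define matrix coefficients $c_{a,j}\in H^*$ by $h\cdot a=\sum_j c_{a,j}(h)\,e_j$. Dualising the action, $L_0:=\text{span}\{c_{a,j}\}$ is a sub-Hopf-algebra of $H^*$, and under Hopf duality sub-Hopf-algebras of $H^*$ biject with Hopf ideals of $H$; the action factors through a proper Hopf quotient of $H$ precisely when $L_0\subsetneq H^*$. So $H\curvearrowright A$ is inner faithful iff $L_0=H^*$.

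Assuming inner faithfulness, pick $d=\dim H$ matrix coefficients $c_1,\ldots,c_d$ forming a $k$-basis of $H^*$; each $c_i$ arises from some $a^{(i)}\in A$. After enlarging $R$ to contain the (finitely many) PBW coordinates of the $a^{(i)}$, we have $a^{(i)}\in A_R$ and hence $c_i\in H_R^*$, which is $R$-free of rank $d$ by the construction in Lemma~\ref{lem:R-order}. Writing the $c_i$ in a fixed $R$-basis of $H_R^*$ yields a matrix $M\in M_d(R)$ with $\det M$ nonzero in $k$, hence nonzero in $R$. Inverting $\det M$ and applying Lemma~\ref{lemm2}(i) to the finitely generated ring $R[1/\det M]$ produces, for $p\gg 0$, a homomorphism $\psi: R[1/\det M]\to \overline{\FF}_p$. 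The reductions $\psi(c_1),\ldots,\psi(c_d)$ then form an $\overline{\FF}_p$-basis of $H_p^*$ consisting of matrix coefficients of $H_p\curvearrowright A_p$, so the coefficient sub-Hopf-algebra of the reduced action is all of $H_p^*$, proving (ii).

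The main technical obstacle is preserving the spanning condition $L_0=H^*$ under reduction mod $p$: matrix coefficients that are linearly independent in $H^*$ could a priori become linearly dependent in $H_p^*$. The successive enlargements of $R$ (first to house the $a^{(i)}$ inside $A_R$, then to invert $\det M$) are engineered precisely to rule this out, and Lemma~\ref{lemm2}(i) --- which applies to any finitely generated subring of $k$ --- is what permits these enlargements without losing the existence of reductions to $\overline{\FF}_p$.
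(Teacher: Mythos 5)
Part (i) of your proposal matches the paper. The problem is in part (ii), and it is a genuine gap: your characterization of inner faithfulness is wrong. The span $L_0$ of matrix coefficients of the $H$-module $A$ is a subcoalgebra of $H^*$, but it is in general neither a subalgebra nor, more importantly, the right object to compare with $H^*$. Note that $L_0^{\perp}=\{h\in H: h\cdot A=0\}$ is exactly the annihilator of $A$, so ``$L_0=H^*$'' is equivalent to the action of $H$ on $A$ being \emph{faithful} as a module, whereas inner faithfulness only says that no nonzero \emph{Hopf ideal} annihilates $A$. These are genuinely different conditions (e.g.\ for $H=kG$ a faithful $G$-action on a module that misses some irreducible constituents is inner faithful but not faithful). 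The correct statement is that the action is inner faithful iff the Hopf subalgebra of $H^*$ \emph{generated} by $L_0$ is all of $H^*$; products of matrix coefficients of $A$ are matrix coefficients of the tensor powers $A^{\otimes s}$, not of $A$ itself. Consequently your key step --- ``pick $d$ matrix coefficients $c_1,\dots,c_d$ of the action on $A$ forming a basis of $H^*$'' --- may simply be impossible under the hypothesis of inner faithfulness, and the rest of the argument (the matrix $M$, inverting $\det M$) has nothing to apply to.

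The repair is precisely what the paper does: first show that inner faithfulness in characteristic zero forces $H$ to act \emph{faithfully} on some tensor power $A^{\otimes s}$. This is proved by letting $K_s$ be the kernel of the action on $A^{\otimes s}$, noting the chain $K_s\supset K_{s+1}$ stabilizes at $K=\bigcap_s K_s$, and checking that $K$ is a bialgebra ideal, hence a Hopf ideal by \cite[Proposition 7.6.1]{Rad}, hence zero. Once one has a genuinely faithful action on $A^{\otimes s}$, a nondegeneracy-of-a-determinant argument over $R$ (essentially your matrix $M$, applied to $A_R^{\otimes s}$ rather than $A_R$) shows $H_p$ acts faithfully on $A_p^{\otimes s}$ for $p\gg 0$; finally, since any Hopf ideal of $H_p$ annihilating $A_p$ also annihilates $A_p^{\otimes s}$, the action of $H_p$ on $A_p$ is inner faithful. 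So your determinant/denominator mechanism is sound, but it must be run on a tensor power of $A$ after the faithfulness-on-$A^{\otimes s}$ step, which is the missing idea in your write-up.
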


\begin{proof}
(i) The action $\cdot_p: H_p \otimes A_p \to A_p$ is obtained by tensoring the action $\cdot_R:H_R \otimes A_R\to A_R$ with $\overline{\FF}_p$ over $R$ using $\psi: R \to \overline{\FF}_p$. \par \smallskip

(ii) Let us first show that $H$ acts faithfully on $A^{\otimes s}$ for some $s$. Let $K_{s} \subset H$ be the kernel of the action of $H$ on $A^{\otimes s}$. Observe that $K_{s} \supset K_{s+1}$ because $A^{\otimes s}=A^{\otimes s}\otimes 1\subset A^{\otimes s+1}$. Let $K=\bigcap_{s} K_{s}$. There is an integer $s_0$ such that $K=K_s$ for all $s \ge s_0$. Given $h \in K$, consider the action of $\Delta(h)$ on $A^{\otimes s}\otimes A^{\otimes t}$ for $s,t\geq s_0$. Since $A^{\otimes s}\otimes A^{\otimes t}$ is a faithful module
over $H/K\otimes H/K$, we find that $\Delta(h)\in K\otimes H+H\otimes K$. Thus, $K$ is a bialgebra
ideal of $H$, hence a Hopf ideal by \cite[Proposition 7.6.1]{Rad}. Since $H$ acts on $A$ inner faithfully, this implies that $K=0$,
as claimed.

Now we reduce the faithful action of $H$ on $A^{\otimes s}$ above modulo $p$ as follows. Using that $H$ is finite dimensional, there exist $v_1,\ldots,v_{q} \in A_R^{\otimes s}$ and $R$-linear maps $f_{ij}: A_R^{\otimes s} \to R$, $i=1,\ldots,d$, $j=1,\ldots,q$, such that the matrix with entries  $b_{il}:=\sum_j f_{lj}(h_i\cdot v_j)$ has a nonzero determinant $b:=\det(b_{il})\in R$. Then $b$ is invertible modulo $p$ when $p$ is sufficiently large (namely, does not divide the norm of $b$). So,
the matrix $(b_{il})$ is nondegenerate modulo $p$, which implies that $H_p$ acts faithfully on $A_p^{\otimes s}$.

Finally, note that any Hopf ideal of $H_p$ annihilating $A_p$ would also annihilate $A_p^{\otimes s}$. So, $H_p$ acts inner faithfully on $A_p$.
\end{proof}

We will also need the following lemma:

\begin{lemma} \label{lem:Hmodp}
If $H$ is semisimple (hence cosemisimple), then for a sufficiently large $p$, the Hopf algebra $H_p$ over $\overline{\FF}_p$ is semisimple and cosemisimple.
\end{lemma}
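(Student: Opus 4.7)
The plan is to exhibit a left integral in $H_p$ with nonzero counit and a right integral in $H_p^*$ with nonzero evaluation at $1$, and then apply Maschke's theorem for Hopf algebras twice. By semisimplicity (and cosemisimplicity, via Larson--Radford in characteristic zero) of $H$, there exist a left integral $\Lambda \in H$ with $\varepsilon(\Lambda)\neq 0$ and a right integral $\lambda \in H^*$ with $\lambda(1_H)\neq 0$. Rescaling, we may normalize $\varepsilon(\Lambda)=1$ and $\lambda(1_H)=1$.

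Next, I would enlarge the finitely generated ring $R$ of Lemma \ref{lem:R-order} (and correspondingly the order $H_R$) so that its generators include the coordinates of $\Lambda$ in the basis $\{h_i\}$ and of $\lambda$ in the dual basis $\{h_i^*\}$. Then $\Lambda \in H_R$ and $\lambda \in H_R^* := \textrm{Hom}_R(H_R,R)$, while $R$ remains a finitely generated subring of $k$. The defining identities $h\Lambda = \varepsilon(h)\Lambda$ and the dual right-integral identity for $\lambda$ are $R$-linear, hence persist after base change along any ring homomorphism $\psi: R \to \overline{\FF}_p$.

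Now fix $p$ large enough that $\Spec R(\overline{\FF}_p)$ is nonempty (Lemma \ref{lemm2}(i)) and pick any $\psi$ in this set. The reductions $\Lambda_p := \Lambda \otimes_R 1 \in H_p$ and $\lambda_p \in H_p^*$ are then a left integral of $H_p$ and a right integral of $H_p^*$, respectively. The normalization gives $\varepsilon_{H_p}(\Lambda_p) = \psi(1)=1$ and $\lambda_p(1_{H_p})=\psi(1)=1$, both nonzero in $\overline{\FF}_p$. Maschke's theorem for $H_p$ then yields that $H_p$ is semisimple, and Maschke's theorem applied to $H_p^*$ yields that $H_p^*$ is semisimple, i.e., $H_p$ is cosemisimple.

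The argument is rather direct; the only point worth highlighting is the normalization step, which guarantees that the nondegeneracy of the integrals survives reduction modulo $p$ without imposing extra conditions on $p$ beyond those already required for the reduction of $H$ itself to exist. In particular, semisimplicity and cosemisimplicity of $H_p$ are verified independently, so no positive-characteristic version of Larson--Radford is needed.
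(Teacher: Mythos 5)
Your proof is correct, but it takes a genuinely different route from the paper. You produce a normalized left integral $\Lambda\in H$ with $\varepsilon(\Lambda)=1$ and a right integral $\lambda\in H^*$ with $\lambda(1)=1$ (using Larson--Radford in characteristic zero for the latter), arrange that they are defined over the order, and then invoke the Larson--Sweedler/Maschke criterion for $H_p$ and $H_p^*$ after reduction; the normalization makes the nonvanishing of $\varepsilon(\Lambda_p)$ and $\lambda_p(1)$ automatic. The paper instead uses separability: semisimplicity of $H$ over an algebraically closed field gives an $H$-bimodule splitting $\vartheta\colon H\to H\otimes H$ of the multiplication, and reducing $\vartheta$ modulo $p$ shows $H_p$ is separable, hence semisimple, with the dual argument for $H^*$ giving cosemisimplicity. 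The two arguments are of comparable difficulty and both rest on the same bookkeeping point, which you handle explicitly and the paper leaves implicit: the auxiliary data ($\Lambda$ and $\lambda$ for you, the coefficients of $\vartheta$ and its dual analogue for the paper) must be defined over the finitely generated ring, which is achieved by enlarging $R$ by finitely many scalars. One small caution about your enlargement: since $H_{\psi,p}$ and the later arguments (Proposition \ref{prop:modp} and the injectivity step via Lemma \ref{lemm2}(ii) in Theorem \ref{main1}) are all phrased in terms of a single ring $R$, you should enlarge $R$ once and for all before defining the reductions, rather than after the fact; a homomorphism $\psi\colon R\to\overline{\FF}_p$ need not extend to the larger ring, so the cleanest statement is that the whole scheme is run with the enlarged ring from the start. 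Your closing observation is also accurate: neither proof needs a positive-characteristic analogue of Larson--Radford, since semisimplicity and cosemisimplicity of $H_p$ are verified separately.
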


\pf
Since $H$ is a semisimple algebra over an algebraically closed field, it is separable. So, the multiplication map $\mu: H\otimes H\to H$ admits a splitting map of $H$-bimodules $\vartheta: H\to H\otimes H$. Reducing $\vartheta$ modulo $p$ for $p$ sufficiently large, we see that the same fact holds for $H_p$. Therefore, $H_p$ is separable, and hence semisimple. The same argument may be applied to $H^*$ to prove that $H_p$ is cosemisimple.
\epf

\section{Semisimple cosemisimple Hopf actions on division algebras} \label{sec:division}

In this section, we establish our result on actions of semisimple cosemisimple Hopf algebras on division algebras (Proposition \ref{noaction}(ii) below). Here, we work over an algebraically closed field $F$ of arbitrary characteristic.

\begin{lemma} \label{lem:quotdiv}
Let $H$ be a Hopf algebra over $F$. If a PI domain $B$ over $F$ admits an inner faithful action of $H$, then so does its quotient division algebra ${\mathcal Q}_B$.
\end{lemma}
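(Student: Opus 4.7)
The proof naturally splits into two parts: extending the $H$-action from $B$ to $\mathcal{Q}_B$, and then verifying that the extension remains inner faithful. Inner faithfulness is the easy direction. Since the extended action restricts on $B \subseteq \mathcal{Q}_B$ to the original action, any Hopf ideal $I \subseteq H$ annihilating $\mathcal{Q}_B$ must also annihilate $B$, and so $I = 0$ by the assumed inner faithfulness on $B$. The substantive content therefore lies in the extension of the action.

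Since $B$ is a PI domain, Posner's theorem provides $\mathcal{Q}_B$ as the Ore ring of quotients of $B$, a division algebra of finite dimension over its center $F(Z(B))$. The most naive extension strategy---localizing at $Z(B) \setminus \{0\}$ within the category of $H$-module algebras---fails in general, because $H$ need not preserve $Z(B)$; for instance, when $H$ has a nontrivial skew-primitive element, a central element of $B$ can be mapped to a noncentral one. So a more careful construction is needed.

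My plan is to invoke (or reprove directly) the general principle that a finite-dimensional Hopf algebra action on a prime Goldie ring extends uniquely to its ring of fractions. Concretely, for each nonzero $b \in B$ and each $h \in H$, the value $h \cdot b^{-1} \in \mathcal{Q}_B$ is uniquely determined by
\[
\sum_{(h)} (h_{(1)} \cdot b)(h_{(2)} \cdot b^{-1}) \;=\; \varepsilon(h)\cdot 1,
\]
interpreted (as $h$ ranges over a basis of $H$) as a linear system in $\mathcal{Q}_B$ of size $d = \dim_F H$ for the unknowns $h'\cdot b^{-1}$. Equivalently, one seeks the convolution inverse of the $F$-linear map $\ell_b : H \to B$, $h \mapsto h \cdot b$, in the convolution algebra $\mathrm{Hom}_F(H, \mathcal{Q}_B)$. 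The main obstacle is showing this convolution inverse exists for every nonzero $b$: the coefficient matrix equals the identity at $b=1$ (since $\ell_1 = \varepsilon$ is the convolution unit), and its invertibility over $\mathcal{Q}_B$ for general nonzero $b$ is the technical heart of the argument, following from a nondegeneracy argument that exploits the primeness of $B$ and the structure of $\mathcal{Q}_B$ as a finite-dimensional central simple algebra. Once $\ell_b^{-1}$ is constructed and independence of the representation $q = ab^{-1}$ is checked, the $H$-module algebra axioms on $\mathcal{Q}_B$ transfer from those on $B$ by density, completing the proof.
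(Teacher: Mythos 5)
Your reduction of the problem is the right one, and the easy half (inner faithfulness passes from $B$ to ${\mathcal Q}_B$ because any Hopf ideal killing ${\mathcal Q}_B$ kills $B$) is correct and is exactly how it works. But the substantive half --- extending the $H$-action to ${\mathcal Q}_B$ --- is not actually proved in your sketch. You correctly reformulate the problem: for each nonzero $b\in B$ the candidate values $h\cdot b^{-1}$ must form a convolution inverse of $\ell_b:h\mapsto h\cdot b$ in $\mathrm{Hom}_F(H,{\mathcal Q}_B)\cong {\mathcal Q}_B\otimes H^*$, i.e.\ a certain $d\times d$ matrix over ${\mathcal Q}_B$ must be invertible. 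You then say this invertibility ``follows from a nondegeneracy argument that exploits the primeness of $B$ and the structure of ${\mathcal Q}_B$ as a finite-dimensional central simple algebra'', and you call it the technical heart. That is precisely the point: this step \emph{is} the theorem, it is not routine (the matrix is invertible at $b=1$, but there is no evident reason it stays invertible, or even a non-zero-divisor in ${\mathcal Q}_B\otimes H^*$, for arbitrary $b$), and no argument for it is given. Likewise, well-definedness on expressions $ab^{-1}$ and the module-algebra axioms are only asserted ``by density''. So as a self-contained proof there is a genuine gap exactly where you flag the difficulty.

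The paper sidesteps all of this by quoting known results: ${\mathcal Q}_B$ exists and is a division algebra by Posner/Goldie theory for PI domains (\cite{Co}, \cite{GW}), and the extension of the $H$-module algebra structure from $B$ to its quotient division ring is \cite[Theorem 2.2]{SV} (Skryabin--Van Oystaeyen). Your opening alternative ``invoke (or reprove directly) the general principle'' would be fine if you invoke it --- then your proof coincides with the paper's --- but the direct re-proof you outline stops short of the key lemma. Two smaller points: the lemma as stated does not assume $H$ finite dimensional, whereas your linear-system argument uses $d=\dim_F H<\infty$ (harmless for the application in the paper, but a restriction of the statement); and note that \cite[Theorem 2.2]{SV} needs no hypothesis that $Z(B)$ be $H$-stable, which is indeed why one cannot just centrally localize, as you observe.
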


\pf Since $B$ is a PI domain, after localization we obtain a classical quotient ring ${\mathcal Q}_B$, which is a division algebra; see \cite[Corollary~7.5.2]{Co} and \cite[Theorem~6.8]{GW}. Now apply \cite[Theorem 2.2]{SV} to obtain the result.
\epf

Let $D$ be a division algebra over $F$ that carries an action of $H$. Denote by $D^H$ the subalgebra of $H$-invariants in $D$. Moreover, for a division subalgebra $C$ of $D$, let $[D:C]_l$ and $[D:C]_r$ denote the left and right dimensions of $D$ over $C$, respectively.

\begin{lemma}\label{dens} \cite[Corollary~2.3]{BCF} One has
$$[D:D^H]_l \leq d \quad \textstyle{and} \quad [D:D^H]_r \leq d.$$

\vspace{-.3in}
\qed
\vspace{.1in}
\end{lemma}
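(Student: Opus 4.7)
The plan is to prove the lemma by a Jacobson density argument applied to the smash product $D \# H$ acting on $D$. First I would view $D$ as a left $D\#H$-module via $(a \otimes h)\cdot x := a(h \cdot x)$. Any nonzero $D\#H$-submodule of $D$ is in particular a nonzero left $D$-submodule, hence equals $D$, so $D$ is a simple $D\#H$-module. To identify its commutant, observe that a $D\#H$-endomorphism of $D$ is left $D$-linear, so must be right multiplication by some $c \in D$; the further requirement that it commute with the $H$-action is $h \cdot (xc) = (h \cdot x) c$ for all $h,x$, which on setting $x = 1$ forces $h \cdot c = \varepsilon(h) c$, i.e.\ $c \in D^H$. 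Hence $\mathrm{End}_{D\#H}(D) \cong (D^H)^{\mathrm{op}}$, and equivalently the natural left multiplication turns $D$ into a faithful simple left $D^H$-module for which the image of $D\#H$ in $\mathrm{End}_{D^H}(D)$ is dense.

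Next, by the standard sharpened form of Jacobson's density theorem, for any $v_1, \dots, v_n \in D$ that are left $D^H$-linearly independent and any targets $w_1, \dots, w_n \in D$, there is $\xi \in D \# H$ with $\xi \cdot v_i = w_i$ for all $i$. Fixing an $F$-basis $h_1, \dots, h_d$ of $H$ and expanding $\xi = \sum_{k=1}^{d} a_k \otimes h_k$ with $a_k \in D$, the vector $(\xi \cdot v_i)_{i=1}^n \in D^n$ lies in the left $D$-linear span of the $d$ tuples $\bigl((h_k \cdot v_i)\bigr)_{i=1}^n$, $k = 1, \dots, d$. This span has left $D$-rank at most $d$, so if $n > d$ it cannot equal all of $D^n$, a contradiction. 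Thus $[D : D^H]_l \leq d$.

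For the bound on the right dimension, I would apply the same argument to the opposite algebra $D^{\mathrm{op}}$, equipped with the antipode-twisted action $h \cdot^{\mathrm{op}} x := S(h) \cdot x$. A direct calculation using the anti-multiplicativity of $S$ shows this is a left $H$-module algebra structure on $D^{\mathrm{op}}$, and since $S$ is bijective the invariant subalgebra coincides with $(D^H)^{\mathrm{op}}$. The left dimension of $D^{\mathrm{op}}$ over $(D^H)^{\mathrm{op}}$ equals the right dimension of $D$ over $D^H$, so the preceding bound yields $[D : D^H]_r \leq d$.

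The main subtlety I anticipate is the first step, namely identifying the commutant precisely as $(D^H)^{\mathrm{op}}$ and justifying the use of the density theorem in a potentially infinite-dimensional setting (so that finite-dimensional quotient rings of the commutant need not appear). Beyond that, the dimension count is essentially formal, and the antipode twist needed for the right-dimension statement is a routine check.
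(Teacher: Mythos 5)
The paper offers no argument for this lemma at all: it is quoted directly from \cite[Corollary~2.3]{BCF} with an immediate \qed. Your proposal supplies the standard proof of that cited result, via the smash product $D\,\#\,H$ and the Jacobson density theorem, so in spirit it is exactly the right argument. However, there are two left/right slips that need fixing. First, the commutant you compute consists of \emph{right} multiplications by elements of $D^H$; consequently $D$ is to be viewed as a \emph{right} $D^H$-vector space, density holds in $\mathrm{End}(D_{D^H})$, and the density theorem lets you prescribe images only on families $v_1,\dots,v_n$ that are \emph{right} $D^H$-independent (no relation $\sum v_i c_i=0$ with $c_i\in D^H$), not left independent as you wrote. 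Your dimension count (the image of $\xi=\sum_k a_k\# h_k$ on such a tuple lies in the left $D$-span of the $d$ tuples $((h_k\cdot v_i))_i$, so $n\le d$) is correct, but what it directly bounds is $[D:D^H]_r\le d$, not $[D:D^H]_l$.

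Second, the twisted operation $h\cdot^{\mathrm{op}}x:=S(h)\cdot x$ is \emph{not} a left $H$-module structure on $D^{\mathrm{op}}$: since $S$ is anti-multiplicative, $(hh')\cdot^{\mathrm{op}}x=h'\cdot^{\mathrm{op}}(h\cdot^{\mathrm{op}}x)$, so this is a left module over $H^{\mathrm{op}}$ (equivalently, one may keep the action untwisted and regard $D^{\mathrm{op}}$ as an $H^{\mathrm{cop}}$-module algebra). This is harmless because the bound just established depends only on the dimension of the acting Hopf algebra, and $\dim H^{\mathrm{op}}=\dim H^{\mathrm{cop}}=d$, while the invariants are still $D^H$ as a set. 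With these corrections the two halves of your argument simply exchange roles: the density argument applied to $H$ acting on $D$ gives $[D:D^H]_r\le d$, and the same argument applied to the opposite situation gives $[D:D^H]_l\le d$. So the strategy is sound and matches the source of the citation, but as written the bookkeeping of sides is inverted at both steps and should be straightened out.
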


\begin{proposition}\label{noaction}
Let $H$ be a Hopf algebra of dimension $d$ over an algebraically closed field $F$. Let $D$ be a division algebra over $F$ of degree $m$. If ${\rm gcd}(d!,m)=1$, then:
\begin{enumerate}
\item[(i)] The center $Z$ of $D$ is $H$-stable, and $D=ZD^H$. \smallskip
\item[(ii)] If $H$ is semisimple and cosemisimple, any action of $H$ on $D$ factors through an action of a cocommutative Hopf algebra.
\end{enumerate}
\end{proposition}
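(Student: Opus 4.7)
The plan is to prove part (i) by first establishing the structural identity $D=ZD^H$ and then deducing the $H$-stability of $Z$; part (ii) will then follow from (i) together with previously established results on Hopf actions on commutative domains.

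For (i), set $B:=ZD^H$. This is a subring of $D$ because $Z$ is central. I first show $B$ is a division subalgebra via a polynomial argument, using the standard fact that $D^H$ is itself a division subalgebra of $D$. By Lemma \ref{dens}, $[D:D^H]_r\leq d$, so for any nonzero $u\in B$ the powers $1,u,\dots,u^d$ are right $D^H$-linearly dependent. Taking a minimal monic relation $u^k+u^{k-1}b_{k-1}+\cdots+b_0=0$ with $b_i\in D^H$, minimality and the domain property of $D$ force $b_0\neq 0$, and solving yields $u^{-1}\in B$. The crucial step is the ensuing dimension count: $B$ is a division $Z$-subalgebra of the central simple $Z$-algebra $D$ of dimension $m^2$, so $D$ is a free left $B$-module and the tower formula gives $[D:B]_l\cdot[B:Z]=m^2$, whence $[D:B]_l$ divides $m^2$. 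Combined with $[D:B]_l\leq[D:D^H]_l\leq d$ (from the tower $D\supseteq B\supseteq D^H$ and Lemma \ref{dens}) and the observation that $\gcd(d!,m)=1$ forces every prime divisor of $m$ to exceed $d$, the only divisor of $m^2$ bounded by $d$ is $1$; hence $D=B=ZD^H$. The $H$-stability of $Z$ then follows quickly: for any $h\in H$, $z\in Z$, $c\in D^H$, expanding $h\cdot(zc)$ and $h\cdot(cz)$ via the coproduct and using $h_{(i)}\cdot c=\varepsilon(h_{(i)})c$ yields
\[
h\cdot(zc)=(h\cdot z)c \quad \text{and} \quad h\cdot(cz)=c(h\cdot z),
\]
and equating via $zc=cz$ shows $h\cdot z$ centralizes $D^H$. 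Any such element also centralizes $Z$ trivially, hence centralizes $ZD^H=D$, so $h\cdot z\in Z(D)=Z$.

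For (ii), the identity $h\cdot(zc)=(h\cdot z)c$ for $z\in Z$, $c\in D^H$ shows the $H$-action on $D=ZD^H$ is completely determined by its restriction to $Z$; in particular, the largest Hopf ideal of $H$ annihilating $D$ equals the one annihilating $Z$. Hence the action of $H$ on $D$ factors through a cocommutative Hopf algebra if and only if the action on $Z$ does. Since $Z$ is a commutative domain and $H$ is semisimple and cosemisimple, this in turn follows from the arbitrary-characteristic extension of \cite[Theorem~1.3]{EW1} for semisimple cosemisimple Hopf actions on commutative domains.

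The main obstacle is the dimension argument in (i), whereby the coprimality hypothesis $\gcd(d!,m)=1$, combined with the divisibility $[D:B]_l\mid m^2$ from the tower formula, is converted into the sharp equality $[D:B]_l=1$. Everything else reduces either to routine Hopf-algebra manipulations or to a direct invocation of the cited commutative-case result.
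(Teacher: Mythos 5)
Your proof is correct and follows essentially the same route as the paper: the tower/coprimality count forcing $[D:ZD^H]_l=1$ via Lemma \ref{dens}, the computation $(h\cdot z)a=a(h\cdot z)$ showing $Z=C_D(D^H)$ is $H$-stable, and the reduction of (ii) to a semisimple cosemisimple action on the commutative domain $Z$, handled by the arbitrary-characteristic results of \cite{EW1}. The only difference is that you explicitly verify that $ZD^H$ is a division subalgebra before using the dimension tower (a point the paper leaves implicit), which is a harmless and welcome extra detail.
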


\pf (i) Consider the subalgebra $ZD^H$ of $D$. Set $e=[D:D^H]_l$. By Lemma \ref{dens}, $e \leq d$. On the other hand, we have:
$$\begin{array}{l}
e   = [D:D^H]_l=[D:ZD^H]_l \, [ZD^H:D^H]_l, \vspace{3pt} \\
m^2 = [D:Z]=[D:ZD^H]_l \, [ZD^H:Z].
\end{array}$$
Then $[D:ZD^H]_l$ is a common divisor of $e$ and $m^2$. Since ${\rm gcd}(d!,m)=1$, it must be $[D:ZD^H]_l=1.$ Hence $D=ZD^H$. From this, it follows that the centralizer $C_D(D^H)$ of $D^H$ in $D$ equals $Z$. Now note that $C_D(D^H)$ is $H$-stable, since one can easily check that $(h \cdot z)a = a(h \cdot z)$ for $h \in H$, $z\in C_D(D^H)$, and $a\in D^H.$ \par \medskip

(ii) It suffices to show that if the action of $H$ on $D$ is inner faithful, then $H$ is cocommutative. By (i), $D=ZD^H$ and $Z$ is $H$-stable. Let $I$ be a Hopf ideal of $H$ such that $I\cdot Z=0$. For any $h\in I, ~z\in Z, ~a\in D^H$, we have $h\cdot (za)=(h\cdot z)a=0$. Hence, $I\cdot (ZD^H)= I \cdot D=0$. Thus,
$I=0$. This shows that $H$ acts inner faithfully on a field. Applying \cite[Theorems 4.1 and 5.1]{EW1}, we obtain that $H$ is cocommutative.
\epf

\begin{remark}
(1) Proposition \ref{noaction}(ii) is a strengthening of \cite[Theorem 5.1]{EW1}, which says that the conclusion holds if $m=1.$ \par \smallskip

(2) Proposition \ref{noaction}(ii) fails in the nonsemisimple case, as there are many inner faithful actions of noncocommutative finite dimensional Hopf algebras on commutative domains; see \cite{EW2}. We conjecture (see \cite[Conjecture~5.3]{EW1}) that the result will still hold in the case that $H$ is cosemisimple, but not necessarily semisimple. \par \smallskip
\end{remark}

Notice also that when $[D:D^H]$ divides $d$, we could replace $d!$ with $d$ in Proposition \ref{noaction}. If the extension $D/D^H$ is Hopf-Galois, then $[D:D^H]=d$ and we can assume ${\rm gcd}(d,m)=1.$ Indeed, we ask:

\begin{question}
If a semisimple cosemisimple Hopf algebra $H$ over an algebraically closed field $F$ acts inner faithfully on a division algebra $D$ over $F$, then is $D/D^H$ Hopf-Galois?
\end{question}

The converse is always true for any $H$-module algebra $B$. To see this, consider the coaction $\rho: B \to B \otimes H^*$. Note that, by definition, the Galois map $can: B \otimes_{B^H} B \to B \otimes H^*$ given by $b \otimes b' \mapsto (b \otimes 1) \rho(b')$ is surjective. Hence, $\Ima \rho$ cannot land in $B \otimes (H/K)^*$ for a nonzero subspace $K$ of $H$.

\section{Semisimple Hopf actions on Weyl algebras} \label{sec:Weyl}

Recall that $k$ denotes an algebraically closed field of characteristic zero.
We are finally in a position to prove our main result:

\begin{theorem}\label{main1} Let $A:=\Weyl(k)$ be a Weyl algebra over $k$. Then, any semisimple Hopf action on $A$ factors through a group action.
\end{theorem}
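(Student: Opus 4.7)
The plan is to reduce the problem to positive characteristic, where Weyl algebras become PI, and then invoke Proposition~\ref{noaction}(ii) on the quotient division algebra. By the equivalent reformulation, it suffices to show that if $H$ acts inner faithfully on $A = \Weyl(k)$, then $H$ is cocommutative: since $H$ is semisimple over an algebraically closed field of characteristic zero, cocommutativity then forces $H \cong k[G]$ for a finite group $G$, so the action factors through a group action.

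First I would apply Lemma~\ref{lem:R-order} to descend the action to a Hopf $R$-order $H_R$ acting on $A_R = \Weyl(R)$. For any sufficiently large prime $p$ and any $\psi \in \Spec R(\overline{\FF}_p)$, Proposition~\ref{prop:modp} provides an inner faithful action of $H_p := H_{\psi,p}$ on $A_p := \Weyl(\overline{\FF}_p)$, and Lemma~\ref{lem:Hmodp} guarantees that $H_p$ is both semisimple and cosemisimple. Next I would invoke the classical fact that $A_p$ is a PI domain of PI-degree $p^n$: its center is the polynomial algebra $\overline{\FF}_p[x_1^p,\dots,x_n^p,y_1^p,\dots,y_n^p]$, over which $A_p$ is free of rank $p^{2n}$. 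Lemma~\ref{lem:quotdiv} then endows the quotient division algebra $D_p$ of $A_p$ with an inner faithful $H_p$-action, and $D_p$ has degree $m = p^n$ over its center.

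For every prime $p > d$, we have $\gcd(d!, p^n) = 1$, so Proposition~\ref{noaction}(ii) applies and forces $H_p$ to be cocommutative. To conclude, I would lift cocommutativity from the $H_p$ back to $H$ by considering the map $\sigma := \Delta - \tau \circ \Delta : H_R \to H_R \otimes_R H_R$, where $\tau$ is the flip; its matrix entries relative to a fixed $R$-basis of $H_R$ lie in $R$. By the previous step, every such entry vanishes under every reduction $\psi : R \to \overline{\FF}_p$ for all sufficiently large $p$. The injectivity of $\Psi_\ell$ in Lemma~\ref{lemm2}(ii) then forces these entries to vanish in $R$ itself, so $\sigma = 0$ on $H_R$, and by extension of scalars $\sigma = 0$ on $H$, as desired.

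The one conceptual input sitting outside the machinery developed in the excerpt is the identification of the PI-degree of $\Weyl(\overline{\FF}_p)$ as $p^n$ (equivalently, the Azumaya property of $A_p$ over its center). Beyond this, the main technical obstacle I anticipate is the bookkeeping of the several distinct ``sufficiently large $p$'' conditions arising along the way: existence of a reduction $\psi$, preservation of inner faithfulness through a mod-$p$ nondegeneracy, semisimplicity and cosemisimplicity of $H_p$, and the arithmetic bound $p > d$. One has to verify that they can all be imposed simultaneously for every $p \geq \ell$ with a single fixed $\ell$, so that Lemma~\ref{lemm2}(ii) genuinely applies at the lifting step.
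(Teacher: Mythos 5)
Your proposal is correct and follows essentially the same route as the paper: reduce mod $p$, use that $\Weyl(\overline{\FF}_p)$ is Azumaya of degree $p^n$ over its center, apply Proposition~\ref{noaction}(ii) to the quotient division algebra $D_p$ for $p>d$, and then lift cocommutativity back via the injectivity in Lemma~\ref{lemm2}(ii). Your explicit argument with $\sigma=\Delta-\tau\circ\Delta$ and its matrix entries over the free $R$-module $H_R$ is exactly the unpacking of the paper's one-line appeal to Lemma~\ref{lemm2}(ii), and your closing concern about imposing the finitely many ``sufficiently large $p$'' conditions simultaneously is handled just as in the paper (they are finitely many constraints on $R$, so a single threshold $\ell$ works).
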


\pf
We may assume that $H$ acts on $A$ inner faithfully. It is well known that the Weyl algebra $A_p$ in positive characteristic is a PI domain; namely, it is an Azumaya algebra of degree $p^n$ over its center, which is a polynomial algebra in $x_i^p$ and $y_i^p$ for $i=1,\dots,n$.
Let $D_p$ be the full localization of $A_p$. Then, $D_p$ is a division algebra of degree $p^n$.

Let $H_p$ denote the reduction of $H$ modulo $p$ from Section \ref{sec:modp}. Then $H_p$ is a semisimple
cosemisimple Hopf algebra over $\overline{\FF}_p$ by Lemma \ref{lem:Hmodp}, which acts inner faithfully on $D_p$ for sufficiently large $p$ due to Proposition \ref{prop:modp} and Lemma \ref{lem:quotdiv}. Now take $p>d$. Then Proposition \ref{noaction}(ii) implies that $H_p$ is cocommutative. Note that this is true for any choice of the homomorphism $\psi$ from Lemma \ref{lemm2}. But by Lemma \ref{lemm2}(ii), the direct product of the possible homomorphisms $\psi$ is an {\it injection} of $R$ into a direct product of fields. This implies that $H_R$ is cocommutative. Thus, $H$ is cocommutative, and hence
$H$ is a group algebra, as desired.
\epf

We extend the result above to the case when $H$ is finite dimensional, not necessarily semisimple, in the Hopf-Galois setting.

\begin{theorem}\label{main1a}
Let $A$ be a Weyl algebra over $k$, and let $H$ be a finite dimensional Hopf algebra over $k$
which acts on $A$. Assume that this action gives rise to an $H^*$-Hopf-Galois extension $A^H\subset A$. Then, the action of $H$ on $A$ factors through a group action.
\end{theorem}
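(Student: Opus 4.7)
The plan is to adapt the reduction modulo $p$ argument of Theorem~\ref{main1}, using the Hopf-Galois hypothesis as a substitute for the semisimplicity of $H$. Assume without loss of generality that the action of $H$ on $A$ is inner faithful. The key new input from the Hopf-Galois assumption is an \emph{exact} equality $[D_p:D_p^{H_p}]_l = d$ in place of the inequality of Lemma~\ref{dens}, so we only need $\gcd(d, p^n) = 1$ rather than $\gcd(d!, p^n) = 1$.

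First I would descend the action together with the Galois isomorphism $\mathrm{can}: A\otimes_{A^H} A \to A \otimes H^*$ to a finitely generated subring $R\subset k$, following Section~\ref{sec:modp}; after enlarging $R$ if needed, the descended map is an $R$-linear isomorphism, so specialisation via $\psi: R \to \overline{\FF}_p$ yields an $H_p^*$-Hopf-Galois extension $A_p^{H_p} \subset A_p$ on which $H_p$ acts inner faithfully (Proposition~\ref{prop:modp}). Passing to the division quotient $D_p$ of $A_p$, the action extends inner faithfully by Lemma~\ref{lem:quotdiv}, and localisation preserves the Galois isomorphism, so $D_p^{H_p} \subset D_p$ is $H_p^*$-Hopf-Galois with $[D_p : D_p^{H_p}]_l = d$. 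Since $D_p$ has degree $p^n$ and $p > d$, $\gcd(d, p^n) = 1$, and the argument of Proposition~\ref{noaction}(i) (which only requires $\gcd([D:D^H]_l, m)=1$, as noted in the paragraph just before the Question in Section~\ref{sec:division}) gives $D_p = Z_p (D_p)^{H_p}$, with $Z_p$ the $H_p$-stable centre of $D_p$.

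Next I would restrict the Hopf-Galois structure to $Z_p \subset D_p$. The equality $D_p = Z_p (D_p)^{H_p}$ combined with $[D_p : D_p^{H_p}]_l = d$ and Lemma~\ref{dens} yields an isomorphism $D_p \cong Z_p \otimes_{Z_p^{H_p}} (D_p)^{H_p}$ and $[Z_p : Z_p^{H_p}] = d$; the centraliser calculation in the proof of Proposition~\ref{noaction}(i) shows $\rho(Z_p) \subseteq Z_p \otimes H_p^*$; and the Galois map $\mathrm{can}_D$ factors as $\mathrm{can}_Z \otimes_{Z_p^{H_p}} \mathrm{id}_{(D_p)^{H_p}}$, so $\mathrm{can}_Z: Z_p \otimes_{Z_p^{H_p}} Z_p \to Z_p \otimes H_p^*$ is an isomorphism by faithful flatness, making $Z_p^{H_p} \subset Z_p$ an $H_p^*$-Hopf-Galois extension. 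Since $Z_p$ is commutative, $(a \otimes 1) \rho(b) = \rho(b)(a \otimes 1)$ for all $a,b \in Z_p$, so the image of $\mathrm{can}_Z$, which is all of $Z_p \otimes H_p^*$, is generated as a ring by the two mutually commuting commutative subrings $Z_p \otimes 1$ and $\rho(Z_p)$. Hence $Z_p \otimes H_p^*$ is commutative, so $H_p^*$ is commutative, i.e., $H_p$ is cocommutative.

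Finally, cocommutativity is the vanishing of $(\Delta - \tau\Delta)(h) \in H_R \otimes_R H_R$ for $h$ in an $R$-basis of $H_R$; holding modulo every sufficiently large $p$ forces it in $H_R$, and hence in $H$, via Lemma~\ref{lemm2}(ii). A finite-dimensional cocommutative Hopf algebra over an algebraically closed field of characteristic zero is a group algebra by Cartier--Gabriel--Kostant, so the action of $H$ on $A$ is a group action. I expect the main technical obstacle to be the first step: verifying that the full Hopf-Galois data (in particular the inverse of the Galois map) descends cleanly to a finitely generated $R$-order before specialisation; once this is arranged, the subsequent restriction to $Z_p$ and the commutativity argument are routine.
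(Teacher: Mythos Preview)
Your proposal is correct and follows essentially the same route as the paper: reduce modulo a large prime, apply Proposition~\ref{noaction}(i) to see that the center $Z_p$ of $D_p$ is $H_p$-stable with $D_p=Z_pD_p^{H_p}$, restrict the Galois map to $Z_p$, and use its surjectivity onto $Z_p\otimes H_p^*$ to conclude that $H_p^*$ is commutative. The only difference is packaging of the surjectivity step: you pass through a tensor decomposition $D_p\cong Z_p\otimes_{Z_p^{H_p}}D_p^{H_p}$ and a faithful-flatness argument, whereas the paper uses the one-line identity $\beta_p(cz\otimes c'z')=(cc'\otimes 1)\bar\beta_p(z\otimes z')$ and a direct $Z_p$-dimension count on $\mathrm{Im}\,\bar\beta_p$; the paper's version has the mild advantage that it only requires \emph{surjectivity} of $\beta_p$, which is what actually descends cleanly (cf.\ the explicit argument in the proof of Proposition~\ref{main1c}). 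One small remark: your opening observation that only $\gcd(d,p^n)=1$ is needed is correct but irrelevant here, since for $p>d$ one already has $\gcd(d!,p^n)=1$, so Proposition~\ref{noaction}(i) applies without invoking the Hopf-Galois hypothesis at that stage---the Galois assumption is used only for the surjectivity of $\beta_p$.
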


\pf
We keep the notation from the previous proof. Assume that $p$ is sufficiently large.
Then by Proposition \ref{noaction}(i), the center $Z_p$ of $D_p$ is $H_p$-stable.
So, the map $\beta_p: D_p\otimes D_p\to D_p\otimes H^*_{p}$ given by $\beta_p(a\otimes b)=(a\otimes 1)\rho(b)$
restricts to an algebra map $\bar\beta_p: Z_p\otimes Z_p\to Z_p\otimes H^*_p$. Note that $\Ima\bar\beta_p$ is a $Z_p$-vector space under multiplication in the first tensor factor. Let $v_1,\dots,v_r$ be a basis of this space ($r\le \dim H_{p}$).

Let $z,z'\in Z_p$ and $c,c'\in D_p^{H_{p}}$. Then
$$\beta_p(cz\otimes c'z')=(cc'\otimes 1)\bar\beta_p(z\otimes z').$$
By Proposition \ref{noaction}(i), one has $Z_pD_p^{H_{p}}=D_p$. Hence, $\Ima\beta_p$ is spanned by $v_1,\dots,v_r$
as a left $D_p$-vector space (under multiplication in the first tensor factor).

Now, since the action of $H$ on $A$ gives rise to a Hopf-Galois extension,  the action of $H_p$ on $A_p$, and hence the action of $H_p$ on $D_p$, gives rise to a Hopf-Galois extension as well,
i.e., $\Ima\beta_p=D_p\otimes H^*_{p}$. This yields $r\ge \dim H_{p}$. Thus, $r=\dim H_{p}$ and $\Ima\bar\beta_p=Z_p\otimes H_p^*$. Therefore, $H_p^*$ is commutative and $H_p$ is cocommutative. So we conclude as in the proof of Theorem \ref{main1}
that $H$ is cocommutative, hence a group algebra.
\epf

\begin{proposition}\label{main1b}
Theorems \ref{main1} and \ref{main1a} remain true if $A$ were replaced by ${\bold A}_n(k[z_1,\dots,z_s])$, a Weyl algebra over a polynomial algebra.
\end{proposition}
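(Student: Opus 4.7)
The plan is to follow the proofs of Theorems \ref{main1} and \ref{main1a} essentially verbatim, with only the bookkeeping changes needed to accommodate the additional central variables. Write $B := {\bold A}_n(k[z_1,\dots,z_s])$ and suppose $H$ acts inner faithfully on $B$. The reduction-mod-$p$ machinery of Section \ref{sec:modp} extends to $B$ by enlarging the finitely generated subring $R \subset k$ to contain the coefficients of the polynomials expressing $h_i \cdot x_j$, $h_i \cdot y_j$, and $h_i \cdot z_l$. This produces a Hopf $R$-order $H_R$ acting on $B_R := {\bold A}_n(R[z_1,\dots,z_s])$, and tensoring along any $\psi: R \to \overline{\FF}_p$ yields an inner faithful action of $H_p$ on $B_p := {\bold A}_n(\overline{\FF}_p[z_1,\dots,z_s])$ for $p$ large, by the same determinant argument as in Proposition \ref{prop:modp}; in the semisimple case, $H_p$ is moreover semisimple and cosemisimple by Lemma \ref{lem:Hmodp}.

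The key new ingredient to check is the PI structure of $B_p$. Since $B_p \cong {\bold A}_n(\overline{\FF}_p) \otimes_{\overline{\FF}_p} \overline{\FF}_p[z_1,\dots,z_s]$, its center is the polynomial ring $\overline{\FF}_p[x_i^p, y_i^p, z_l]$, over which $B_p$ is a free module of rank $p^{2n}$. Therefore $B_p$ is Azumaya of degree $p^n$ over its center, and by Lemma \ref{lem:quotdiv} its classical quotient division algebra $D_p$ is a division algebra of degree $p^n$ carrying an inner faithful action of $H_p$. For $p > d$ we have ${\rm gcd}(d!, p^n) = 1$, which is precisely the hypothesis of Proposition \ref{noaction}. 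Its part (ii) then forces $H_p$ to be cocommutative for all sufficiently large $p$ and all $\psi$; Lemma \ref{lemm2}(ii) transports this cocommutativity to $H_R$ and hence to $H$, proving the analog of Theorem \ref{main1}.

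For the analog of Theorem \ref{main1a}, I would apply Proposition \ref{noaction}(i) to obtain $Z_p D_p^{H_p} = D_p$ and then repeat the proof of Theorem \ref{main1a} verbatim with $A$ replaced by $B$: the Galois map $\beta_p$ descends to an algebra map $\bar\beta_p: Z_p \otimes Z_p \to Z_p \otimes H_p^*$, whose image already spans $\Ima\beta_p$ as a left $D_p$-module, and the Hopf-Galois hypothesis then forces $\bar\beta_p$ to be surjective, whence $H_p^*$ is commutative. The main point requiring care is confirming that the Hopf-Galois property of $B^H \subset B$ descends to $(B_p)^{H_p} \subset B_p$ for $p$ sufficiently large; this is handled exactly as in the original proof of Theorem \ref{main1a}, as bijectivity of the Galois map survives reduction modulo all but finitely many primes. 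Apart from this, everything is a formal transport of the original arguments and no substantially new obstacle arises.
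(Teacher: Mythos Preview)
Your proposal is correct and follows exactly the approach the paper intends: the paper's own proof is the single sentence ``The proof is analogous to that of Theorem \ref{main1} and Theorem \ref{main1a},'' and what you have written is a faithful unpacking of precisely that analogy, with the only new observation being that $B_p$ is still Azumaya of degree $p^n$ over its center $\overline{\FF}_p[x_i^p,y_i^p,z_l]$.
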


\pf
The proof is analogous to that of Theorem \ref{main1} and Theorem \ref{main1a}.
\epf

\begin{proposition}\label{main1c}
Proposition \ref{main1b} remains true if the module algebra \linebreak $A:={\bold A}_n(k[z_1, \dots, z_s])$ were replaced by the quotient division algebra ${\mathcal Q}_A$ of $A$.
\end{proposition}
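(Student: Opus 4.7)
My plan is to adapt the proofs of Theorems \ref{main1} and \ref{main1a} to the division algebra $\mathcal{Q}_A$, where $A := \Weyl(k[z_1,\dots,z_s])$. The main obstacle is that $\mathcal{Q}_A$ is not finitely generated as a $k$-algebra, so the reduction-mod-$p$ procedure of Section \ref{sec:modp} cannot be applied to $\mathcal{Q}_A$ directly. To get around this I would first replace $\mathcal{Q}_A$ by a finitely generated $H$-stable subalgebra $B \subset \mathcal{Q}_A$ having the same division ring of fractions.

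Concretely, fix a basis $h_1,\dots,h_d$ of $H$ and the generators $g_1,\dots,g_{2n+s}$ of $A$ (the $x_j$'s, $y_j$'s, and $z_\ell$'s). Each $h_i \cdot g_j \in \mathcal{Q}_A$ may be written as $P_{ij}\, Q_{ij}^{-1}$ with $P_{ij},Q_{ij}\in A$ and $Q_{ij}\neq 0$. Let $V \subset \mathcal{Q}_A$ be the finite-dimensional $k$-span of the $g_j$ and the $h_i\cdot g_j$; this $V$ is $H$-stable since $(hh_i)\cdot g_j$ is a $k$-linear combination of the $h_l\cdot g_j$. The coproduct formula $h\cdot(ab) = \sum (h_{(1)}\cdot a)(h_{(2)}\cdot b)$ then propagates $H$-stability to the $k$-subalgebra $B \subset \mathcal{Q}_A$ generated by $V$. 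Since $A\subset B$, we have $\mathcal{Q}_B = \mathcal{Q}_A$, so the $H$-action on $B$ remains inner faithful.

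Next, I would apply the reduction machinery of Section \ref{sec:modp} to $B$ in place of $A$: choose a finitely generated subring $R\subset k$ supporting a Hopf $R$-order $H_R$ of $H$, an $R$-order $B_R$ of $B$ stable under $H_R$, and an Ore localization $A_R[T_R^{-1}]$ of $A_R := \Weyl(R[z_1,\dots,z_s])$ at the multiplicative set $T_R$ generated by the $Q_{ij}$, so that $B_R \subset A_R[T_R^{-1}]$. For $p$ large enough that each $Q_{ij}$ reduces to a regular element of $A_p$, the reduction $B_p$ fits inside the chain $A_p \subset B_p \subset A_p[T_p^{-1}] \subset \mathcal{Q}_{A_p}$. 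In particular $B_p$ is a PI domain with $\mathcal{Q}_{B_p} = \mathcal{Q}_{A_p}$, a division algebra of degree $p^n$. Proposition \ref{prop:modp}(ii) supplies an inner faithful $H_p$-action on $B_p$, Lemma \ref{lem:quotdiv} promotes it to an inner faithful action on $\mathcal{Q}_{B_p}$, and for $p > d$ Proposition \ref{noaction}(ii) forces $H_p$ to be cocommutative. Lemma \ref{lemm2}(ii) then yields $H$ cocommutative, hence a group algebra, as in the proof of Theorem \ref{main1}.

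For the Hopf-Galois variant extending Theorem \ref{main1a}, the same $B$ works: since $\mathcal{Q}_B=\mathcal{Q}_A$, the Hopf-Galois hypothesis on $\mathcal{Q}_A$ descends to a surjective Galois map on $\mathcal{Q}_{B_p}$ after reduction, and the centralizer argument from the proof of Theorem \ref{main1a} then applies verbatim with $\mathcal{Q}_{B_p}$ in place of $D_p$. I expect the main technical hurdle to be the bookkeeping needed to ensure that $R$ can be enlarged so that $B_R$ is flat over $R$ and the denominators $Q_{ij}$ stay regular for all sufficiently large $p$, without spoiling the injectivity of the homomorphism $R \hookrightarrow \prod \overline{\FF}_p$ used in Lemma \ref{lemm2}(ii).
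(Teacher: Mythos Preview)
Your proposal is correct and follows essentially the same strategy as the paper: both construct a finitely generated $H$-stable subalgebra $B\supset A$ generated by the elements $h_i\cdot g_j$, observe that its quotient ring coincides with the division algebra $D_p$ in characteristic $p$, and then run the Theorem~\ref{main1}/\ref{main1a} argument there. The only organizational difference is that the paper reduces the formulas $h_i\cdot g_j = T^{-1}P'_{ij}$ (with a single common left denominator $T\in A$) modulo $p$ first and builds $B$ directly inside $D_p$, thereby sidestepping the flatness bookkeeping you anticipate; for the Hopf--Galois surjectivity the paper makes your ``descends'' step explicit by fixing preimages $a_i\in \mathcal{Q}_A\otimes \mathcal{Q}_A$ of $1\otimes h_i^*$, clearing denominators, and reducing those finitely many fractional identities modulo $p$.
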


\pf
The proof is along the lines of those of Theorems \ref{main1} and \ref{main1a}. Let us describe the necessary changes. As in the proof of Lemma \ref{lem:R-order}, we can write $h_i\cdot x_j=P_{ij}, \ h_i\cdot y_j=Q_{ij}$, and $h_i\cdot z_\ell=R_{i\ell},$ where now $P_{ij},Q_{ij},R_{i\ell} \in {\mathcal Q}_A$. There is a common denominator $T \in A$ such that $P_{ij}=T^{-1}P_{ij}',$ \linebreak $Q_{ij}=T^{-1}Q_{ij}'$, and $R_{i\ell}=T^{-1}R_{i\ell}'$, with $P_{ij}',Q_{ij}',R_{i\ell}'\in A$.
For a sufficiently large prime $p$, these formulas can be reduced modulo $p$. Set now $A_p$ for the Weyl algebra over the given polynomial algebra in characteristic $p$ (reduction of $A$ modulo $p$) and $D_p$ for its quotient division algebra. We can define an algebra map $\rho: A_p \to D_p\otimes H_p^*$ such that $\rho(T)$ is invertible.

Let us show that $\rho$ extends to a coaction of $H_p^*$ on $D_p$, i.e., that $\rho(a)$ is invertible for any nonzero $a \in A_p$. To this end, let ${\mathcal Q}_{\rho,A_p}\subset D_p$ be the partial localization of $A_p$ obtained by inverting all elements $a\in A_p$ such that $\rho(a)$ is invertible.
Then, $\rho$ extends to an algebra map $\rho': {\mathcal Q}_{\rho,A_p}\to D_p\otimes H_p^*$. Moreover, $P_{ij},Q_{ij},R_{i\ell}\in {\mathcal Q}_{\rho,A_p}$, and
\begin{equation}\label{formu}
h_r \cdot P_{ij}=\sum_m c_{ri}^mP_{mj},\quad h_r \cdot Q_{ij}=\sum_m c_{ri}^mQ_{mj},\quad h_r \cdot R_{i\ell}=\sum_m c_{ri}^mR_{m\ell},
\end{equation}
where $c_{rj}^m$ are the structure constants of the multiplication of $H_p$.
Let $B$ be the subalgebra of $D_p$ generated by $P_{ij},Q_{ij},$ and $R_{i\ell}$.
Write $\{h_i^*\}_i$ for the dual basis of $\{h_i\}_i$. Since
$$
x_j=\sum_i h_i^*(1)P_{ij},\quad  y_j=\sum_i h_i^*(1)Q_{ij},\quad z_\ell=\sum_i h_i^*(1)R_{i\ell},
$$
we have $A_p\subset B$. Also, $B\subset {\mathcal Q}_{\rho,A_p}$, and by \eqref{formu}, $h_r \cdot B \subset B$ for all $r$. Thus, the map $\rho'$ defines a coaction $B\to B\otimes H_p^*$. By \cite[Theorem 2.2]{SV} (which applies because $B$ is contained in a division algebra), $\rho'$ extends to a coaction $\rho'':{\mathcal Q}_B\to {\mathcal Q}_B\otimes H_p^*$.
But ${\mathcal Q}_B=D_p$, as $A_p\subset B\subset D_p$, so we get a coaction $D_p\to D_p\otimes H_p^*$. (In particular, we see that ${\mathcal Q}_{\rho,A_p}=D_p$.)

Now we proceed as in the proofs of Theorems \ref{main1} and \ref{main1a}.
In particular, in Theorem \ref{main1a}, to establish the surjectivity of $\beta_p: D_p\otimes D_p\to D_p\otimes H_p^*$ for large $p$ we modify the argument as follows: pick $a_i\in {\mathcal Q}_A\otimes {\mathcal Q}_A$ such that $\beta(a_i)=1\otimes h_i^*$ (where $\beta: {\mathcal Q}_A\otimes {\mathcal Q}_A\to {\mathcal Q}_A\otimes H^*$ is the map in characteristic zero). There is a finite number of fractions involved when expressing $a_i$ as a sum of elements of the form $b_j \otimes b'_j$ with $b_j,b'_j \in {\mathcal Q}_A.$ Arguing as before with a common denominator, we can choose $p$ so large that the formula $\beta(a_i)=1\otimes h_i^*$ can be reduced modulo $p$. Then $1\otimes h_i^*\in \Ima\beta_p$ and $\beta_p$ is surjective.
\epf

\begin{remark} Note that in the proof of Proposition~\ref{main1c}, we do not reduce ${\mathcal Q}_A$ modulo $p$ (which is not a well-behaved construction),
but rather reduce modulo $p$ the explicit formulas defining the action of $H$ on the generators of $A$.
\end{remark}


\section*{Acknowledgments}
\noindent J. Cuadra was supported by grant MTM2011-27090 from MICINN and FEDER and by the research group FQM0211 from Junta de Andaluc\'{\i}a. P. Etingof and C. Walton were supported by the US National Science Foundation, grants DMS-1000113 and DMS-1401207, respectively.

This work was mainly done during the visit of the first named author to the Mathematics department of MIT. He is deeply grateful for the warm and generous hospitality and excellent working conditions.

Finally, we are very grateful to the referee for useful suggestions.


\end{document}